\newtheorem{theorem}{Theorem}
\newtheorem{lemma}{Lemma}
\newtheorem{remark}{Remark}
\definecolor{codegreen}{rgb}{0,0.6,0}
\definecolor{codegray}{rgb}{0.5,0.5,0.5}
\definecolor{codepurple}{rgb}{0.58,0,0.82}
\definecolor{backcolour}{rgb}{0.95,0.95,0.92}
\lstdefinestyle{mystyle}{
    backgroundcolor=\color{backcolour},   
    commentstyle=\color{codegreen},
    keywordstyle=\color{magenta},
    numberstyle=\tiny\color{codegray},
    stringstyle=\color{codepurple},
    basicstyle=\ttfamily\footnotesize,
    breakatwhitespace=false,         
    breaklines=true,                 
    captionpos=b,                    
    keepspaces=true,                 
    numbers=left,                    
    numbersep=5pt,                  
    showspaces=false,                
    showstringspaces=false,
    showtabs=false,                  
    tabsize=2
}
\newcommand{\Complex}{\mathbb{C}}
\newcommand{\Real}{\mathbb{R}}
\newcommand{\NN}{\mathbb{N}}
\newcommand{\conj}[1]{\overline{#1}}
\newcommand{\re}[1]{\ensuremath{\textmd{re}}(#1)}
\newcommand{\im}[1]{\ensuremath{\textmd{im}}(#1)}
\newcommand{\float}[1]{\ensuremath{\hat{#1}}}
\newcommand{\sign}[1]{\ensuremath{\textmd{sign}}(#1)}
\title{Numerical analysis of Givens rotation}
\author{Weslley S Pereira  \and Ali Lotfi \and Julien Langou}
\date{%
    {\footnotesize Department of Mathematical and Statistical Sciences, University of Colorado Denver}\\[2ex]%
    \today
}
\begin{document}
\maketitle

\begin{abstract}
    Generating 2-by-2 unitary matrices in floating-precision arithmetic is a delicate task. One way to reduce the accumulation error is to use less floating-point operations to compute each of the entries in the 2-by-2 unitary matrix. This paper shows an algorithm that reduces the number of operations to compute the entries of a Givens rotation. Overall, the new algorithm has more operations in total when compared to algorithms in different releases of LAPACK, but less operations per entry. Numerical tests show that the new algorithm is more accurate on average.
\end{abstract}

\section{Introduction}

A Givens rotation $Q \in \mathbb{K}^{2\times 2}$ associated to the pair $(f,g) \in \mathbb{K}^2$, $\mathbb{K} = \Real$ or $\Complex$, is a unitary matrix
\begin{align*}
	Q = \begin{bmatrix}
		c & s\\
		-\conj{s} & c
	\end{bmatrix},
\end{align*}
that satisfies
\begin{align*}
	\begin{bmatrix}
		c & s\\
		-\conj{s} & c
	\end{bmatrix} \begin{bmatrix}
		f\\
		g
	\end{bmatrix}
	= \begin{bmatrix}
		r\\
		0
	\end{bmatrix}
\end{align*}
for a particular $r \in \mathbb{K}$. In the real case, $c$ and $s$ represent sine and cosine, respectively, and, with this information, one may compute the rotation angle. Notice that the Givens rotation is not unique since $-Q$ is also a Givens rotation for every Givens rotation $Q$. Moreover, there are infinite possible matrices $Q$ if $f = 0$ and $g \in \Complex$ \cite{Demmel2002}.

There are a couple of papers dedicated to the Givens rotation algorithms in the BLAS and the LAPACK library. The real-arithmetic SROTG and DROTG were first presented in the first-level BLAS documentation \cite{Lawson1979}, and then the LAPACK routines SLARTG, DLARTG, CLARTG and ZLARTG appeared in \cite{Anderson1999}. These algorithms were all reviewed in \cite{Demmel2002}, where the authors propose a new algorithm for the complex-arithmetic case which improves performance and accuracy. This algorithm uses a single square root and a single division, and it is very similar to the algorithm implemented in LAPACK 3.10. The complex-arithmetic Givens rotation algorithm in LAPACK 3.10 was presented in \cite{Anderson2017} and is a slight variation of \cite{Demmel2002} with respect to the scaling of quantities.

The present work is motivated by a bug report about the algorithm in LAPACK 3.10 (See \href{https://github.com/Reference-LAPACK/lapack/issues/629}{github.com/Reference-LAPACK/lapack/issues/629}). The author reported that the new Givens rotations may have lower accuracy than the ones that were in LAPACK up to release 3.9. This could be easily verified by noticing that, after applying several rotations to a unitary $2 \times 2$ matrix, the departure from unitaricity was larger (in average) for 3.10 than 3.9. We performed additional experiments where we couldn't find the referred problem nor verify that the algorithm in LAPACK 3.10 was less accurate.
Moreover, neither \cite{Demmel2002} nor \cite{Anderson2017} deal with the problem of applying several rotations to a single matrix.

This work complements the numerical analysis on the generation of Givens rotations algorithms from previous works in two ways: (1) worst-case scenario analysis; (2) probabilistic distribution of the error after applying several rotations. 

\subsection{Algorithms in real arithmetic}

If $f,g \in \Real$, the Givens rotation is given by
\begin{align}\label{eq:realGivensFormulas}
    c = p \frac{f}{\sqrt{f^2+g^2}}, \quad
    s = p \frac{g}{\sqrt{f^2+g^2}}, \quad
    r = p \sqrt{f^2+g^2},
\end{align}
where $p = 1$ or $-1$,
which are unique expressions aside from one choice of sign.

When no scaling is necessary and both $f$ and $g$ are non zero, \href{https://github.com/Reference-LAPACK/lapack/blob/lapack-3.9/SRC/slartg.f}{LAPACK 3.9 \texttt{slartg}} computes the Givens rotation as follows:
\lstinputlisting[language=Fortran, firstline=153, lastline=154, firstnumber=153]{slartg_LAPACK391.f}
\vspace{-10pt}
\lstinputlisting[language=Fortran, firstline=187, lastline=189, firstnumber=187]{slartg_LAPACK391.f}
\vspace{-10pt}
\lstinputlisting[language=Fortran, firstline=191, lastline=195, firstnumber=191]{slartg_LAPACK391.f}
and
\href{https://github.com/Reference-LAPACK/lapack/blob/lapack-3.10/SRC/slartg.f90}{LAPACK 3.10 \texttt{slartg}} computes the Givens rotation as follows:
\lstinputlisting[language=Fortran, firstline=133, lastline=134, firstnumber=153]{slartg_LAPACK310.f90}
\vspace{-10pt}
\lstinputlisting[language=Fortran, firstline=145, lastline=149, firstnumber=187]{slartg_LAPACK310.f90}

One can see that the two codes behave differently when $|g| \ge |f|$ and $f < 0$. In this region, $p = +1$ in LAPACK 3.9 and $p=-1$ in LAPACK 3.10. The algorithm in LAPACK 3.10 uses $p = \sign{f}$.
The algorithm in \cite[Section~19.6]{Higham2002} is a variant that uses $p=1$.
The real-valued Givens rotation algorithm in LAPACK 3.10 is compatible with its complex-valued algorithm (see the complex case below). This means that the outputs of \texttt{slartg} and \texttt{clartg} approximate the same real quantity for any real pair $(f,g)$. In LAPACK 3.9, \texttt{slartg} and \texttt{clartg} approximate different real quantities when $|f| \le |g|$ and $f < 0$.

Using the simplest algorithm for computing \cref{eq:realGivensFormulas}, we obtain
\begin{align*}
    \float{c} &= \frac{f}{\sqrt{(f^2(1+\delta_1)+g^2(1+\delta_2))(1+\delta_3)}(1+\delta_4)}(1+\delta_5) \\
    &= \frac{f}{\sqrt{f^2+g^2}}\sqrt{\frac{(1+\delta_5)^2}{(1+\delta_6)(1+\delta_3)(1+\delta_4)^2}} \\
    &= c \sqrt{1+\theta_6} \\
    &= c (1+\theta_4),
\end{align*}
where $\delta_6 := (f^2 \delta_1 + g^2 \delta_2) / (f^2+g^2)$, and we use \cref{th:squareRootOnePlusTheta}.
Similar calculations can be used to obtain $\float{s} = s (1+\theta_4')$ and $\float{r} = r (1+\theta_3)$. This is in accord with \cite[Lemma~19.7]{Higham2002}.
The algorithm from LAPACK 3.10 uses $p = 1 / d$, having one additional floating-point operation than the algorithm from LAPACK 3.9. One should expect larger errors in the worst-case scenario using LAPACK 3.10.

\subsection{Algorithms in complex arithmetic}

Let $f,g \in \Complex$. Then, the Givens rotation can be defined as
\begin{align*}
    c &= \frac{|f|}{\sqrt{|f|^2+|g|^2}} \in \Real, \\
    s &= \sign{f}\frac{\conj{g}}{\sqrt{|f|^2+|g|^2}} \in \Complex, \\
    r &= \sign{f}\sqrt{|f|^2+|g|^2} \in \Complex.
\end{align*}
where $|f| := \sqrt{\re{f}^2+\im{f}^2}$ and $\sign{f} := f / |f| \in \Complex$.
When $f = 0$ we may consider: $c = 0$, $s = \sign{\conj{g}}$ and $r = |g|$.
When $g = 0$ we may again consider the ``least work aproach'': $c = 1$, $s = 0$ and $r = f$.
We postpone the details about the algorithms for complex Givens rotations in complex arithmetic to \cref{sec:accuracyComplex}.
All those algorithms approximate $c$, $s$ and $r$ as given above.

\subsection{Outline}

This paper is organized as follows:
\Cref{sec:squareRoot} presents the notation and some results needed for the analysis of the Givens rotation algorithms in complex arithmetic.
The worst-case scenario analysis for the algorithms in LAPACK 3.9 and LAPACK 3.10 is performed in \Cref{sec:accuracyComplex}. In the same section, we introduce the new algorithm that has smaller relative errors.
\Cref{sec:numerical} validates the analysis and compares the accuracy and performance of the algorithms. 
The conclusions are presented in \cref{sec:concl}.


\section{Square root of rounding errors}
\label{sec:squareRoot}

This section uses notation from \cite[Chapter 3]{Higham2002} to obtain estimates for $\sqrt{1+\theta_n}$, which is useful to the analysis of the Givens rotation algorithms.
As in \cite{Higham2002}, the symbol $\delta$ is used to represent an arbitrary real quantity satisfying $|\delta| \le u$ and which can be different on each occurrence. The symbol $u$ represents the unit roundoff.
We use $\delta_i$, $i \in \NN$, whenever we want to emphasize the distinction between different $\delta$.
Moreover, we define
\begin{align}
	\gamma_x := \frac{xu}{1-xu}\,,
\end{align}
where $x \in [0,1/u)$, and $\theta_n$, $n \in \NN$, to every quantity satisfying
\begin{align}
	1+\theta_n := \prod_{i=1}^n (1+\delta_i)^{\rho_i}\,, \qquad \rho_i = \pm 1\,.
\end{align}
We also allow for $\theta_n$ to change on each occurrence.

Starting from $|\theta_n| \le \gamma_n$, we can easily derive
$$0 < 1-\gamma_n \le \sqrt{1-\gamma_n} \le \sqrt{1+\theta_n} \le \sqrt{1+\gamma_n} \le 1+\gamma_n$$
for $nu < 1/2$. The following lemmas give better estimates for 
$\sqrt{1+\gamma_n}$ and $\sqrt{1-\gamma_n}$.

\begin{lemma}\label{th:squareRootOnePlusGamma}
$\sqrt{1+\gamma_x} = 1+\gamma_{\bar{\alpha}x}$ for all $x \in (0,1/u)$, where
\begin{align}\label{eq:squareRootOnePlusGamma}
	\bar{\alpha} = \bar{\alpha}(x) = \frac{1 - \sqrt{1-xu}}{xu}\,.
\end{align}
Moreover, $\bar{\alpha} \in \left(\frac12,1\right)$.
\end{lemma}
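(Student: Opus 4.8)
The plan is to reduce everything to the closed form of $1+\gamma_x$ and then solve for the exponent explicitly. First I would observe that, directly from the definition $\gamma_x = xu/(1-xu)$, one has
\begin{align*}
1 + \gamma_x = \frac{1-xu+xu}{1-xu} = \frac{1}{1-xu},
\end{align*}
so that $\sqrt{1+\gamma_x} = (1-xu)^{-1/2}$. The same identity applied to an arbitrary index $y$ gives $1+\gamma_y = 1/(1-yu)$, so proving $\sqrt{1+\gamma_x} = 1+\gamma_{\bar{\alpha} x}$ amounts to finding $y = \bar{\alpha} x$ with $1/(1-yu) = (1-xu)^{-1/2}$, i.e. $1 - yu = \sqrt{1-xu}$. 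Solving for $y$ yields $y = (1-\sqrt{1-xu})/u$, and dividing by $x$ gives exactly the claimed formula \cref{eq:squareRootOnePlusGamma}.

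For the bounds on $\bar{\alpha}$, the key step is to rationalize the numerator. Multiplying by $(1+\sqrt{1-xu})/(1+\sqrt{1-xu})$ collapses the numerator via the difference of squares:
\begin{align*}
\bar{\alpha} = \frac{1-\sqrt{1-xu}}{xu} = \frac{1-(1-xu)}{xu\,(1+\sqrt{1-xu})} = \frac{1}{1+\sqrt{1-xu}}.
\end{align*}
Since $x \in (0,1/u)$ forces $xu \in (0,1)$, we get $\sqrt{1-xu} \in (0,1)$ and hence $1+\sqrt{1-xu} \in (1,2)$, which immediately gives $\bar{\alpha} \in (1/2,1)$.

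I would close by checking the one point that is easy to overlook: the symbol $\gamma_{\bar{\alpha} x}$ must be well-defined, which requires the index $\bar{\alpha} x$ to lie in $[0,1/u)$. This follows from the bound just established, since $\bar{\alpha} \in (1/2,1)$ and $x \in (0,1/u)$ give $\bar{\alpha} x \in (0,x) \subset (0,1/u)$. There is no genuine obstacle here; the only place where care is needed is the algebraic simplification of $\bar{\alpha}$, and establishing the bounds \emph{before} treating the right-hand side as meaningful keeps the argument clean.
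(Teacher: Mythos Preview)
Your proof is correct and in fact cleaner than the paper's own. The essential difference is that you exploit the closed form $1+\gamma_x = 1/(1-xu)$ at the outset, which turns the equation $\sqrt{1+\gamma_x} = 1+\gamma_y$ into the \emph{linear} relation $1-yu = \sqrt{1-xu}$ and hands you $y$ immediately. The paper instead squares first, writing $(1+\gamma_y)^2 - (1+\gamma_x) = \gamma_y(2+\gamma_y) - \gamma_x$, clears denominators, and is left with the quadratic $p(z) = 2z - xu - z^2$ in $z = yu$; it then computes the discriminant, locates the two roots $z = 1 \pm \sqrt{1-xu}$, and argues that the smaller one is the relevant choice. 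Both routes arrive at the same formula, but yours avoids the spurious second root and the attendant selection argument entirely. Your rationalization $\bar{\alpha} = 1/(1+\sqrt{1-xu})$ is also more informative than the paper's one-line dismissal of the bounds as ``straight-forward,'' and your closing remark that $\bar{\alpha}x \in (0,1/u)$ so that $\gamma_{\bar{\alpha}x}$ is well-defined is a point the paper leaves implicit.
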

\begin{proof}
Let $y := \bar{\alpha}x$ and observe that
\begin{align*}
    (1+\gamma_y)^2 - (1+\gamma_x)
    = \gamma_y(2+\gamma_y)-\gamma_x\,.
\end{align*}
We want to prove the right-hand side is identically zero if we use \cref{eq:squareRootOnePlusGamma}.
In fact,
\begin{align*}
    \gamma_y(2+\gamma_y)-\gamma_x &= \frac{yu}{1-yu}\frac{2-yu}{1-yu} - \frac{xu}{1-xu} = \frac{2yu - xu - (yu)^2}{(1-yu)^2(1-xu)}\,.
\end{align*}
The discriminant of the polynomial $p(yu) := 2(yu) - xu - (yu)^2$ is $\Delta = 4(1-xu)$, which is always positive. This means $p(yu)$ has exactly 2 real roots. Also, $p''(yu) < 0$, which means it assumes positive values between its two roots $yu = 1 \pm \sqrt{1-xu}$. Now, use the definition of $y$ to conclude that \cref{eq:squareRootOnePlusGamma} is the lowest number so that $\sqrt{1+\gamma_x} = 1+\gamma_{\bar{\alpha}x}$.
It is straight-forward to prove that $\frac12 < \frac{1 - \sqrt{1-xu}}{xu} < 1$.
\end{proof}

\begin{lemma}\label{th:squareRootOneMinusGamma}
$\sqrt{1-\gamma_x} = 1-\gamma_{\alpha x}$ for all $x \in (0,1/(2u))$, where
\begin{align}\label{eq:squareRootOneMinusGamma}
	\alpha = \alpha(x) = \frac{1 - \sqrt{1-xu(3-2xu)}}{xu(3-2xu)}\,.
\end{align}
Moreover, $\alpha \in \left(\frac12,1\right)$.
\end{lemma}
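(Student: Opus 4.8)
The plan is to mirror the proof of \cref{th:squareRootOnePlusGamma}. Since $x \in (0,1/(2u))$ forces $xu < 1/2$, we have $1-\gamma_x > 0$, so it suffices to square the claimed identity: I would set $y := \alpha x$ and prove $(1-\gamma_y)^2 = 1-\gamma_x$, i.e. that $(1-\gamma_y)^2 - (1-\gamma_x)$ vanishes when $\alpha$ is given by \cref{eq:squareRootOneMinusGamma}. Expanding, $(1-\gamma_y)^2 - (1-\gamma_x) = \gamma_y^2 - 2\gamma_y + \gamma_x$, and substituting $\gamma_y = \frac{yu}{1-yu}$ and $\gamma_x = \frac{xu}{1-xu}$ and clearing denominators reduces the question to whether the numerator, a quadratic in $yu$, is zero.

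The key step is to recognize that this numerator is exactly $p(yu) := (3-2xu)(yu)^2 - 2\,yu + xu$. Its roots are $yu = \frac{1 \pm \sqrt{1 - xu(3-2xu)}}{3-2xu}$, and a short computation shows the quantity under the radical factors as $1 - xu(3-2xu) = (1-xu)(1-2xu)$. This factorization is where the tighter domain $x \in (0,1/(2u))$ enters: it guarantees $1-xu > 0$ and $1-2xu > 0$, hence a positive discriminant and two real roots, whereas $\sqrt{1+\gamma_x}$ only required $x \in (0,1/u)$. Choosing the smaller root (the one with the minus sign) and recalling $y = \alpha x$ recovers precisely \cref{eq:squareRootOneMinusGamma}, so $p(yu) = 0$ for this $\alpha$ and the squared identity holds.

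It then remains to justify taking positive square roots and to pin down the range of $\alpha$. To extract $\sqrt{1-\gamma_x} = 1-\gamma_{\alpha x}$ rather than $|1-\gamma_{\alpha x}|$, I would verify $1-\gamma_y > 0$; this follows once $\alpha < 1$ is known, since then $yu = \alpha\,xu < xu < 1/2$, giving $\gamma_y < 1$. For the bound $\alpha \in \left(\tfrac12,1\right)$, the cleanest route is to rationalize: writing $t := xu(3-2xu)$, one finds $\alpha = \frac{1-\sqrt{1-t}}{t} = \frac{1}{1+\sqrt{1-t}}$, and since $1-t = (1-xu)(1-2xu) \in (0,1)$ on the given domain, we get $1+\sqrt{1-t} \in (1,2)$ and therefore $\alpha \in \left(\tfrac12,1\right)$. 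There is no serious obstacle here; the only points demanding care are the correct choice of root and the domain-dependent sign conditions, both of which hinge on the factorization $1-xu(3-2xu) = (1-xu)(1-2xu)$ and the restriction $xu < 1/2$.
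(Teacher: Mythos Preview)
Your proof is correct and follows essentially the same approach as the paper's: set $y=\alpha x$, reduce $(1-\gamma_y)^2=1-\gamma_x$ to a quadratic in $yu$, and pick the smaller root. Your execution is in fact slightly cleaner---factoring $1-xu(3-2xu)=(1-xu)(1-2xu)$ directly, and rationalizing to get $\alpha=\frac{1}{1+\sqrt{1-t}}\in(\tfrac12,1)$---whereas the paper checks the discriminant's sign by analyzing the auxiliary quadratic $q(w)=1-3w+2w^2$ and then defers the range of $\alpha$ to the argument of \cref{th:squareRootOnePlusGamma}.
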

\begin{proof}
Let $y := \alpha x$ and observe that
\begin{align*}
    (1-\gamma_y)^2 - (1-\gamma_x)
    = \gamma_y(2-\gamma_y)-\gamma_x\,.
\end{align*}
We want to prove the right-hand side is identically zero if we use \cref{eq:squareRootOneMinusGamma}.
In fact,
\begin{align*}
    \gamma_y(2-\gamma_y)-\gamma_x &= \frac{yu}{1-yu}\frac{2-3yu}{1-yu} - \frac{xu}{1-xu} = \frac{2yu - xu - (yu)^2 (3-2xu)}{(1-yu)^2(1-xu)}\,.
\end{align*}
The discriminant of the polynomial $p(z) = 2z - x - z^2 u (3-2xu)$, $z := yu$, is $\Delta = 4(1-xu(3-2xu))$.
We need to look at the sign of $q(w) = 1 - 3w + 2w^2$, $w := xu$.

The discriminant of $q(w)$ is $\Delta = 9 - 8 = 1$, which means it also has 2 square roots. Since $q''(w) > 0$, we want values $w$ not in between the two roots, i.e., $w \le 1/2$ or $w \ge 1$. Since $w$ is always less than 1, we need to require that $w = xu \le 1/2$.

The rest of the proof follows the same arguments of the proof of \cref{th:squareRootOnePlusGamma}.
\end{proof}

We have estimates for both $\sqrt{1+\gamma_n}$ and $\sqrt{1-\gamma_n}$ for a large range of values of $n$. The next result gives the estimates for $\sqrt{1+\theta_n}$.

\begin{theorem}\label{th:squareRootOnePlusTheta}
Let $n \in \NN$, $\theta \in \Real$ s.t. $nu \le 1/2$ and $1+\theta = \sqrt{1+\theta_n}$. Therefore,
\begin{align*}
|\theta| \le \gamma_{\alpha n},
\end{align*}
where $\alpha$ is given by \cref{eq:squareRootOneMinusGamma}.
Moreover, if $( 3 - 2 n u )(\lfloor\frac{n}2 \rfloor+1)^2 u \le (2 + \lfloor\frac{n}2 \rfloor - \lceil \frac{n}2\rceil)$, then $\theta = \theta_{\lfloor \frac{n}2 \rfloor + 1}$.
\end{theorem}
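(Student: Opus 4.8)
The plan is to treat the two assertions separately: first the one-line bound $|\theta| \le \gamma_{\alpha n}$, and then the sharper ``$\theta = \theta_{\lfloor n/2\rfloor+1}$'' claim under the extra hypothesis. For the bound, I would start from $|\theta_n| \le \gamma_n$, which with $nu \le 1/2$ gives $0 \le 1-\gamma_n \le 1+\theta_n \le 1+\gamma_n$. Since $t\mapsto\sqrt t$ is monotone and $1+\theta = \sqrt{1+\theta_n}$, taking square roots yields $\sqrt{1-\gamma_n} \le 1+\theta \le \sqrt{1+\gamma_n}$. Substituting \cref{th:squareRootOneMinusGamma} on the left and \cref{th:squareRootOnePlusGamma} on the right turns this into $1-\gamma_{\alpha n} \le 1+\theta \le 1+\gamma_{\bar{\alpha} n}$, i.e. $-\gamma_{\alpha n} \le \theta \le \gamma_{\bar{\alpha} n}$. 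The lower inequality is already the desired bound; the upper one needs $\gamma_{\bar{\alpha} n} \le \gamma_{\alpha n}$.

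That reduces to comparing the two constants. Writing $w := nu$, both have the closed form $h(t) = \tfrac{1-\sqrt{1-t}}{t} = \tfrac{1}{1+\sqrt{1-t}}$, with $\bar{\alpha} = h(w)$ and $\alpha = h\!\big(w(3-2w)\big)$. Because $h$ is increasing and $w(3-2w)-w = 2w(1-w) \ge 0$ (while $w(3-2w) \le 1$ for $w \le 1/2$, so the radicands stay nonnegative), I get $\bar{\alpha} \le \alpha$, hence $\gamma_{\bar{\alpha} n} \le \gamma_{\alpha n}$ by monotonicity of $\gamma$. Combining the two ends gives $|\theta| \le \gamma_{\alpha n}$.

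For the ``Moreover'' part the strategy is to show the stated inequality is \emph{equivalent} to $\alpha n \le m$, where $m := \lfloor n/2\rfloor + 1$. This is the whole point, since $\alpha n \le m$ means $\gamma_{\alpha n} \le \gamma_m$, so the first part upgrades to $|\theta| \le \gamma_m$, certifying $\theta$ as a $\theta_m$ (note $m \ge n/2$, so $1+\theta = \sqrt{1+\theta_n}$ already lies in the range spanned by products of $m$ factors). Using $\alpha = 1/\big(1+\sqrt{1-nu(3-2nu)}\big)$, the inequality $\alpha n \le m$ rearranges to $n-m \le m\sqrt{1-nu(3-2nu)}$. Since $n-m = \lceil n/2\rceil - 1 \ge 0$, I can square both sides without losing equivalence, obtaining $nu(3-2nu)m^2 \le m^2-(n-m)^2 = n(2m-n)$; dividing by $n$ and using the identity $2m-n = 2 + \lfloor n/2\rfloor - \lceil n/2\rceil$ produces exactly the hypothesis.

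The main obstacle is this final equivalence. The delicate points are checking that $n-m \ge 0$ so the squaring is reversible, and carrying the floor/ceiling bookkeeping (effectively an even/odd split of $2m-n$) cleanly down to the right-hand side $2 + \lfloor n/2\rfloor - \lceil n/2\rceil$. The $\bar{\alpha}\le\alpha$ comparison in the first part is the only other non-routine step, and it is handled once by the monotonicity of $h$; the endpoint $nu = 1/2$, which sits on the boundary of the domain of \cref{th:squareRootOneMinusGamma}, can be confirmed separately as a limiting case where both sides equal $1$.
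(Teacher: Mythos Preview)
Your proposal is correct and follows essentially the same route as the paper: sandwich $1+\theta$ between $\sqrt{1-\gamma_n}$ and $\sqrt{1+\gamma_n}$, invoke \cref{th:squareRootOnePlusGamma,th:squareRootOneMinusGamma}, and use $\bar\alpha\le\alpha$ to close the first bound; then for the ``Moreover'' part, reduce the stated hypothesis to $\gamma_{\alpha n}\le\gamma_{\lfloor n/2\rfloor+1}$. Your algebraic execution differs only cosmetically---you establish $\bar\alpha\le\alpha$ via the monotone function $h(t)=1/(1+\sqrt{1-t})$ rather than the paper's quadratic comparison $(1+\gamma_y)^2-(1+\gamma_x)\ge(1-\gamma_y)^2-(1-\gamma_x)$, and you rearrange $\alpha n\le m$ by squaring where the paper expands $\gamma_m(2-\gamma_m)-\gamma_n$ directly---but the underlying argument is the same.
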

\begin{proof}
    First, notice that
    \begin{align*}
        (1+\gamma_y)^2 - (1+\gamma_x)
        &= \gamma_y(2+\gamma_y)-\gamma_x\\
        &\ge \gamma_y(2-\gamma_y)-\gamma_x
        = (1-\gamma_y)^2 - (1-\gamma_x)\,,
    \end{align*}
    for all $x,y \in \Real$, which means that $\bar{\alpha} \le \alpha$ from \cref{th:squareRootOnePlusGamma,th:squareRootOneMinusGamma}. Moreover, since $nu \le 1/2$ and $\sqrt{1-\gamma_n} - 1 \le |\sqrt{1+\theta_n} - 1| \le \sqrt{1+\gamma_n} - 1$, \cref{th:squareRootOnePlusGamma,th:squareRootOneMinusGamma} and $\bar{\alpha} \le \alpha$ can be used to conclude that $|\theta| \le \gamma_{\alpha n}$.
    It remains to verify the conditions for $|\theta| \le \gamma_{\lfloor \frac{n}2 \rfloor + 1}$.
    
    Since $nu < 1$, we can use the proof of \cref{th:squareRootOneMinusGamma} to obtain
    \begin{align*}
        &\gamma_{\lfloor \frac{n}2 \rfloor + 1}(2-\gamma_{\lfloor \frac{n}2 \rfloor + 1})-\gamma_n \ge 0\\
        &\quad\Leftrightarrow\quad 2\left(\left\lfloor \frac{n}2 \right\rfloor + 1\right)-n-\left(\left\lfloor \frac{n}2 \right\rfloor + 1\right)^2u(3-2nu) \ge 0\\
        &\quad\Leftrightarrow\quad (3-2nu)\left(\left\lfloor \frac{n}2 \right\rfloor + 1\right)^2u \le 2 + 2\left\lfloor \frac{n}2 \right\rfloor - n\\
        &\quad\Leftrightarrow\quad (3-2nu)\left(\left\lfloor \frac{n}2 \right\rfloor + 1\right)^2u \le 2 + \left\lfloor \frac{n}2 \right\rfloor - \left\lceil \frac{n}2 \right\rceil\,,
    \end{align*}
    which corresponds to the conditions given above.
\end{proof}

\Cref{th:squareRootOnePlusTheta} states that $\sqrt{1+\theta_n}$ is bounded by $1+\gamma_{\alpha n}$, where $\alpha \in (1/2,1)$. Computing $\alpha$ may be tedious and, in most cases, we want $\alpha n \in \NN$. \Cref{th:squareRootOnePlusTheta} provides the bound $1+\gamma_{\lfloor \frac{n}2 \rfloor + 1}$ for $\sqrt{1+\theta_n}$, for small $n \in \NN$. Small here means $n \le \num{4728}$ in single precision and $n \le \num{109588316}$ in double precision.

\begin{remark}\label{th:squareRootSmallN}
    Notice that $\alpha \approx 1/2$ for very small $n$, which means that one should expect $\sqrt{1+\theta_n}$ to be bounded by $1+\gamma_{\frac{n}2}$ for practical purposes.
    \Cref{fig:gammaSmallN} compares the estimates $\gamma_{\frac{n}2}$ and $\gamma_{\lfloor \frac{n}2 \rfloor + 1}$ with $\gamma_{\alpha n}$ for $n = 1, \cdots, 20$ in single precision. The relative difference between $\gamma_{\alpha n}$ and $\gamma_{\frac{n}2}$ is less than $16u$ for those values of $n$.
    For instance, one should expect $|c-\float{c}|/|c|$ and $|s-\float{s}|/|s|$ are closer to $\gamma_3$ in \cite[Lemma~19.7]{Higham2002}. Since $\gamma_3 / \gamma_4  <  0.75$ in single and double precision, the accumulation error after applying several rotations would also be less than $0.75$ of what is predicted by the lemma.
\end{remark}

\begin{figure}[!htb]
\includegraphics[width=\linewidth]{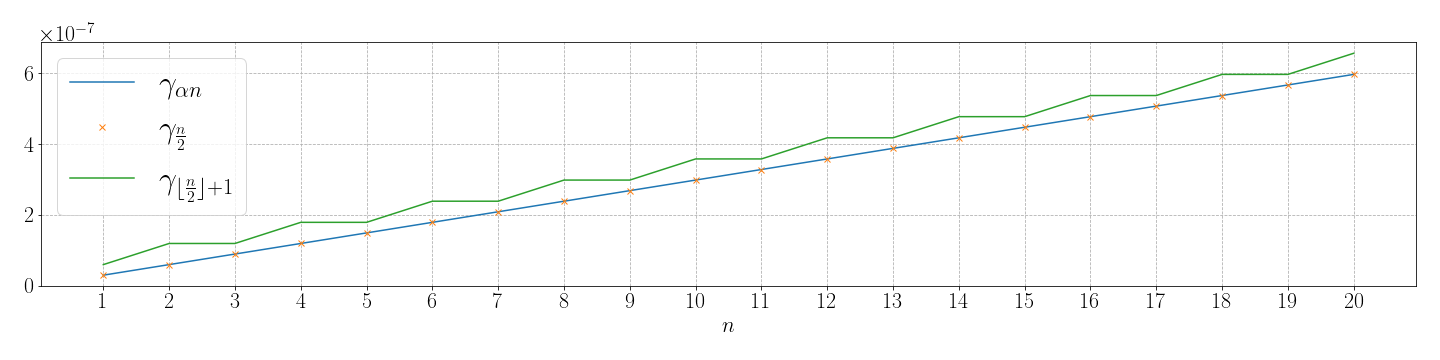}
\caption{Comparison betweeen $\gamma_{\alpha n}$, $\gamma_{\frac{n}2}$ and $\gamma_{\lfloor \frac{n}2 \rfloor + 1}$ for small $n$.}
\label{fig:gammaSmallN}
\end{figure}

\section{Analysis of algorithms in complex arithmetic}
\label{sec:accuracyComplex}

In this section, we compare three algorithms for generating Givens rotations in complex arithmetic: the algorithm in LAPACK 3.9, the algorithm in LAPACK 3.10, and a new proposal. The latter aims to reduce the accumulation errors overall for each of $c$, $s$ and $r$. Hereafter, we use the notation \float{x} that denotes the computed value of $x$.

\subsection{Preliminaries}

In the following, we will be also interested in the accuracy of the rotation matrix
\begin{align}
	\float{Q} = \begin{bmatrix}
		\float{c} & \float{s}\\
		-\conj{\float{s}} & \float{c}
	\end{bmatrix}
\end{align}
in comparison to 
\begin{align}
	Q = \begin{bmatrix}
		c & s\\
		-\conj{s} & c
	\end{bmatrix}.
\end{align}
We assume the floating-point arithmetic is commutative on all operations above.
The accuracy of $\float{Q}$ relies on:
\begin{itemize}
\item Orthogonality of the columns. In this case, $\float{c}\float{s} + (-\float{s}\float{c}) = 0$.
\item Norm of the columns. The error can be measured by $1 - \sqrt{|\float{c}|^2 + |\float{s}|^2}$.
\item Backward error, i.e., $\|\float{Q}^H(\float{r},0)^T - (f,g)^T\|$.
\end{itemize}

Suppose $\float{c} = c(1+\theta_a)$, $\float{s} = s(1+\theta_b)$. Then,
\begin{align*}
\sigma &= \sqrt{|\float{c}|^2 + |\float{s}|^2}
= \sqrt{|c(1+\theta_a)|^2 + |s(1+\theta_b)|^2}\\
&= \sqrt{c^2(1+\theta_a)^2 + |s|^2(1+\theta_b)^2}
\end{align*}
Then, if $m = \max(a,b)$,
\begin{align*}
\sigma &\le 1+\gamma_m \;\Rightarrow\; \sigma-1 \le \gamma_m,\\
\sigma &\ge 1-\gamma_m \;\Rightarrow\; \sigma-1 \le -\gamma_m,
\end{align*}
which means that $\sigma = 1( 1 + \theta_m)$.
As for the backward error, we need to define $\float{r} = r(1+\theta_d)$, and then
\begin{align*}
\float{Q}^H
\begin{bmatrix}
	\float{r}\\
	0
\end{bmatrix}
= \begin{bmatrix}
	\float{c}\float{r}\\
	\conj{\float{s}}\float{r}
\end{bmatrix}
= \begin{bmatrix}
	f\\
	g
\end{bmatrix} (1+\theta_{(m+d)}).
\end{align*}
Thus, the accuracy of $\float{Q}$ relies on how well the algorithm can approximate the pair $(c,s)$, and $r$.

Since $\float{Q}$ is orthogonal, but not necessarily unitary, we can write $\float{Q} = \sigma \tilde{Q}$, where $\tilde{Q}$ is unitary and $\sigma = \sqrt{|\float{c}|^2 + |\float{s}|^2} \in \Real$ is the only singular value of $\float{Q}$. In fact,
\begin{align*}
	\tilde{Q}^H\tilde{Q}
	&= \frac{1}{|\float{c}|^2 + |\float{s}|^2}
	\begin{bmatrix}
		\float{c} & -\float{s}\\
		\conj{\float{s}} & \float{c}
	\end{bmatrix}
	\begin{bmatrix}
		\float{c} & \float{s}\\
		-\conj{\float{s}} & \float{c}
	\end{bmatrix}\\
	&= \frac{1}{|\float{c}|^2 + |\float{s}|^2}
	\begin{bmatrix}
		|\float{c}|^2 + |\float{s}|^2 & \float{c}\float{s} -\float{s}\float{c}\\
		\float{c}\conj{\float{s}} - \conj{\float{s}}\float{c} & |\float{c}|^2 + |\float{s}|^2
	\end{bmatrix} = I_{2\times 2}
\end{align*}
Notice that a product of several finite-precision rotations can be seen as the product of its singular values times a single unitary matrix. This means that $\prod_{i=1}^{m}\float{Q_i} = \left(\prod_{i=1}^{m}\sigma_i\right) \tilde{Q}$ for some unitary matrix $\tilde{Q} \in \Complex^{2\times 2}$.

\subsection{LAPACK 3.9}
\label{sec:analysis391}

When no scaling is necessary and both $f$ and $g$ are non zero, LAPACK 3.9 \href{https://github.com/Reference-LAPACK/lapack/blob/lapack-3.9/SRC/clartg.f}{\texttt{clartg}} computes the Givens rotation as follows:
\lstinputlisting[language=Fortran, firstline=152, lastline=153, firstnumber=152]{clartg_LAPACK391.f}
\vspace{-10pt}
\lstinputlisting[language=Fortran, firstline=178, lastline=179, firstnumber=178]{clartg_LAPACK391.f}
\vspace{-10pt}
\lstinputlisting[language=Fortran, firstline=223, lastline=230, firstnumber=223]{clartg_LAPACK391.f}

We now may analyze the approximation error involved on each of the outputs $c$, $s$ and $r$. First, let us look at the approximation of $c$:
\begin{align*}
    f_2 &= (\re{f}^2(1+\delta_1) + \im{f}^2(1+\delta_1'))(1+\delta_3),
    = |f|^2 (1+\delta_1'')(1+\delta_3)\\
    g_2 &= (\re{g}^2(1+\delta_2) + \im{g}^2(1+\delta_2'))(1+\delta_4),
    = |g|^2 (1+\delta_2'')(1+\delta_4)\\
    \float{c} &= \frac{1}{\sqrt{(1+\frac{g_2}{f_2}(1+\delta))(1+\delta)}(1+\delta)}(1+\delta)\\
    &= \frac{(1+\delta)}{\sqrt{(1+\frac{|g|^2}{|f|^2}(1+\theta_5))(1+\delta)}(1+\delta)}\\
    &= c \frac{(1+\delta)}{\sqrt{(1+\theta_5')(1+\delta)}(1+\delta)}\\
    &= c (1+\theta_6),
\end{align*}
where we used \cref{th:squareRootOnePlusTheta} and
$$\delta_1'' := (\re{f}^2 \delta_1 + \im{f}^2 \delta_1') / (\re{f}^2+ \im{f}^2),$$ $$\delta_2'' := (\re{g}^2 \delta_2 + \im{g}^2 \delta_2') / (\re{g}^2+ \im{g}^2),$$ $$\theta_5' := \frac{\theta_5}{1+\frac{|f|^2}{|g|^2}}.$$
For $r$, LAPACK 3.9 computes
\begin{align*}
    \float{r} &= f  \sqrt{(1+\frac{g_2}{f_2}(1+\delta))(1+\delta)}(1+\delta)(1+\delta)\\
    &= r \sqrt{1+\theta_6'} (1+\theta_2)\\
    &= r (1+\theta_6''),
\end{align*}
and, for $s$, it computes
\begin{align*}
    h_2 &= (f_2+g_2)(1+\delta) = f_2\left(1+\frac{g_2}{f_2}\right)(1+\delta)\\
    \float{s} &= \conj{g} \frac{\float{r}}{h_2} (1 + \theta_3'')\\
    &= \conj{g} \frac{f  \sqrt{(1+\frac{g_2}{f_2})(1+\delta')(1+\delta)}(1+\delta)(1+\delta)}{f_2\left(1+\frac{g_2}{f_2}\right)(1+\delta)} (1 + \theta_3'')\\
    &= \conj{g} f \sqrt{\frac{(1+\delta')(1+\delta)}{f_2 h_2}} (1 + \theta_6''')\\
    &= \conj{g} f \sqrt{\frac{(1+\theta_6'''')}{|f|^2 (|f|^2+|g|^2)}} (1 + \theta_6''')\\
    &= s (1 + \theta_{10}).
\end{align*}

\subsection{LAPACK 3.10}

When no scaling is necessary and both $f$ and $g$ are non zero, 
LAPACK 3.10 \href{https://github.com/Reference-LAPACK/lapack/blob/lapack-3.10/SRC/cslartg.f90}{\texttt{clartg}} computes the Givens rotation as follows:
\lstinputlisting[language=Fortran, firstline=181, lastline=192, firstnumber=181]{clartg_LAPACK310.f90}
The algorithm of LAPACK 3.10 uses a different strategy from LAPACK 3.9 to compute $c$, $s$ and $r$. First, it computes $p$ as follows:
\begin{align*}
    p &= \frac{1}{\sqrt{f_2 h_2(1+\delta)}(1+\delta)}(1+\delta)\\
    p &= \frac{1}{\sqrt{|f|^2 (|f|^2+|g|^2)(1+\theta_5)(1+\delta)}(1+\delta)}(1+\delta)\\
    &= \frac{1}{|f|\sqrt{|f|^2+|g|^2}}(1+\theta_6'),
\end{align*}
Then $c$ is computed as
\begin{align*}
    \float{c} &= (f_2 p)(1+\delta) \\
    &= \frac{f_2}{\sqrt{f_2 h_2 (1+\delta)}(1+\delta)}(1+\delta)(1+\delta) \\
    &= \frac{(1+\delta)(1+\delta)}{\sqrt{\left(1+\frac{g_2}{f_2}\right)(1+\delta)(1+\delta)}(1+\delta)} \\
    &= c (1+\theta_7),
\end{align*}
where we use the same arguments from \cref{sec:analysis391} for the last step.
Then, $r$ is computed as
\begin{align*}
    \float{r} &= f ( h_2 p ) (1+\theta_2)\\
    &= f \left( \frac{h_2}{\sqrt{f_2 h_2 (1+\delta)}(1+\delta)}(1+\delta) \right) (1+\theta_2)\\
    &= f \sqrt{\frac{1+(g_2/f_2)(1+\delta)}{1+\delta}} (1+\theta_4)\\
    &= r \sqrt{1+\theta_6} (1+\theta_4)\\
    &= r (1+\theta_8),
\end{align*}
and $s$ is computed as follows
\begin{align*}
	\float{s} &= \conj{g} ( f p ) (1+\theta_3') = s (1+\theta_9')\,.
\end{align*}

When $f_2 h_2$ would cause an over- or underflow, the algorithm computes $p = 1 / ( \sqrt{f_2}\sqrt{h_2} )$ instead of $p = 1 / ( \sqrt{f_2 h_2} )$, which increases the accumulation error. We will not give details on this case because it would involve repeating most of the steps from before.
However, we show a numerical experiment that stresses this loss of accuracy in \cref{sec:accuracyTests}.

\subsubsection{New algorithm}

We propose a new algorithm that leads to smaller errors in the worst-case scenario. Here is its unscaled part:
\lstinputlisting[language=c++, firstline=321, lastline=350, firstnumber=321]{lartg_algorithms.hpp}
Let us analyze each of the terms for this new algorithm.

\begin{enumerate}
\item Common part assumes $\texttt{safmin}\; f_2 \le h_2 \texttt{safmax}$.
\begin{align*}
    f_2 &= (\re{f}^2(1+\delta_1) + \im{f}^2(1+\delta_1'))(1+\delta_3),
    = |f|^2 (1+\delta_1'')(1+\delta_3)\\
    g_2 &= (\re{g}^2(1+\delta_2) + \im{g}^2(1+\delta_2'))(1+\delta_4)
    = |g|^2 (1+\delta_2'')(1+\delta_4).
\end{align*}
\item If $\texttt{safmin}\; h_2 \le f_2$, then $\texttt{safmin} \le f_2/h_2 \le 1$.
\begin{align*}
    \float{c} &= \sqrt{\frac{f_2}{(f_2+g_2)(1+\delta)}(1+\delta)}(1+\delta)\\
    &= \sqrt{\frac{1}{\left(1+\frac{g_2}{f_2}\right)(1+\delta)}(1+\delta)}(1+\delta)\\
    &= \sqrt{\frac{(1+\theta_4)}{\left(1+\frac{|g|^2}{|f|^2}\right)(1+\delta)}(1+\delta)}(1+\delta)
    = c(1+\theta_5)\\
    \float{r} &= \frac{f}{\float{c}}(1+\delta) = r(1+\theta_6)
\end{align*}
Then, if $f_2 > \texttt{rtmin}$ and $h_2 < \texttt{rtmax}$, $\float{s}$ is
\begin{align*}
    \float{s} &= \conj{g} \frac{f}{\sqrt{f_2 h_2 (1+\delta)}} (1 + \theta_4)\\
     &= \conj{g} \frac{f}{\sqrt{|f|^2 (|f|^2+|g|^2) (1+\theta_6)}} (1 + \theta_4) = s (1+\theta_8).
\end{align*}
If not,
\begin{align*}
    \float{s} &= \conj{g} \frac{\float{r}}{h_2} (1 + \theta_3)\\
     &= \conj{g} \frac{f}{h_2 \float{c}} (1 + \theta_3)(1+\delta)\\
     &= \conj{g} \frac{f}{h_2 \sqrt{\frac{f_2}{h_2}(1+\delta)}(1+\delta)} (1 + \theta_3)(1+\delta)\\
     &= \conj{g} \frac{f}{\sqrt{f_2 (f_2+g_2)(1+\delta')(1+\delta)}(1+\delta)} (1 + \theta_3)(1+\delta)\\
     &= \conj{g} \frac{f}{\sqrt{|f|^2 (|f|^2+|g|^2)(1+\theta_6)}(1+\delta)} (1 + \theta_3)(1+\delta) = s (1+\theta_9).
\end{align*}
\item If $\texttt{safmin}\; h_2 > f_2$, then $f_2/h_2 < \texttt{safmin}$ may be subnormal, and $h_2/f_2$ may overflow. Moreover,\\
$\texttt{safmin} \le f_2^2 \; \texttt{safmax} < f_2 h_2 < h_2^2 \; \texttt{safmin} \le \texttt{safmax}$, and then \\
$\sqrt{\texttt{safmin}} \le \sqrt{f_2 h_2} \le \sqrt{\texttt{safmin}}$.
Also, $g_2 \gg f_2$, which means $h_2 = g_2$.
\begin{align*}
	d &= \sqrt{f_2 h_2(1+\delta)}(1+\delta) = \sqrt{f_2 g_2(1+\delta)}(1+\delta) = \sqrt{|f|^2|g|^2}(1+\theta_{4})\\
    \float{c} &= \frac{f_2}{d}(1+\delta)
    = \frac{f_2}{\sqrt{f_2 g_2(1+\delta)}(1+\delta)}(1+\delta)\\
    &= \frac{1}{\sqrt{\frac{g_2}{f_2}(1+\delta)}(1+\delta)}(1+\delta)
    = c(1+\theta_{5})
\end{align*}
Then, if $c > \texttt{safmin}$, $\float{r}$ is
\begin{align*}
    \float{r} &= \frac{f}{\float{c}}(1+\delta) = r (1+\theta_{6})
\end{align*}
If not, then $f_2 / \sqrt{f_2 h_2} < \texttt{safmin}$, which means \\
$h_2 / \sqrt{f_2 h_2} \le (h_2\;\texttt{safmin}) / f_2 \le 1/\texttt{safmin} = \texttt{safmax}$, and then we can compute
\begin{align*}
    \float{r} &= f\frac{h_2}{d}(1+\theta_2)
    = f\frac{g_2}{\sqrt{f_2 g_2(1+\delta)}(1+\delta)}(1+\theta_2)\\
    &= f\frac{1}{\sqrt{\frac{f_2}{g_2}(1+\delta)}(1+\delta)}(1+\theta_2)
    = r(1+\theta_{6})
\end{align*}
And, finally,
\begin{align*}
    \float{s} &= \conj{g} \frac{f}{d} (1 + \theta_3')
	= s (1+\theta_{7}).
\end{align*}
\end{enumerate}
See \cref{tab:comparisonWorstCaseMethods} for a comparison between the three \texttt{clartg} algorithms.

\begin{table}[!htbp]
\centering
\caption{Comparison of the expected errors in the worst-case scenario on each \texttt{clartg} algorithm for $\texttt{safmin}\; h_2 \le f_2$, $\texttt{rtmin} < f_2$ and $h_2 < \texttt{rtmax}$.}
\label{tab:comparisonWorstCaseMethods}
\begin{tabular}{c|c|c|c}
Algorithm: & LAPACK 3.9 & LAPACK 3.10 & Proposed\\\hline
$|c-\float{c}|\,/\,|c|$ & $\gamma_{6}$  & $\gamma_7$ & $\gamma_{5}$ \\
$|r-\float{r}|\,/\,|r|$ & $\gamma_{6}$  & $\gamma_8$ & $\gamma_{6}$ \\
$|s-\float{s}|\,/\,|s|$ & $\gamma_{10}$ & $\gamma_9$ & $\gamma_{8}$ \\
$|\sigma-\float{\sigma}|\,/\,|\sigma|$ & $\gamma_{10}$ & $\gamma_9$ & $\gamma_{8}$ \\
Backward error & $\gamma_{16}$ & $\gamma_{17}$ & $\gamma_{14}$
\end{tabular}
\end{table}

We shall mention that the unscaled part of the new algorithm computes at most 5 floating-point divisions and at most 2 square roots. The unscaled part of the algorithm from LAPACK 3.9 computes at most 7 floating-point divisions, due to the use of \texttt{lapy2}, and 1 square root.
In LAPACK 3.10, the algorithm executes 1 floating-point division and 1 square root at most.

\section{Numerical results}
\label{sec:numerical}

The tests in this section compare the accuracy and timing between different algorithms for generating Givens rotations. For accuracy, we use a \texttt{C++} version of each algorithm. For the timing experiments, we use versions in Fortran.

For the accuracy tests, all errors are measured in double precision.
The outputs $c$, $s$ and $r$ of the proposed algorithm, in double precision, are used as the correct answer for the rotation. It makes no difference which double-precision algorithm we choose for the comparisons that follow.
In the following, "cast from double to float" stands for applying the proposed algorithm, in double-precision, and then casting the output to single precision.

\subsection{Setup}
\label{sec:numericalSetup}

The tests are performed in a Linux \texttt{20.04.3-Ubuntu SMP x86\_64} machine with kernel \texttt{5.11.0-41-generic}.
We use the GCC 9.3.0 compiler with default configurations. 


Random pairs $(f,g)$ are generated using \texttt{rand()} from \texttt{stdlib.h}, and then converted into float using the following routines.
\begin{lstlisting}[language=c++]
inline constexpr double randToRadians( int N ) {
    return ((double) N / RAND_MAX) * (2.*M_PI);
}
\end{lstlisting}
\begin{lstlisting}[language=c++]
inline float randToModulus( int N ) {
    using std::exp2f;
    const int expm = std::numeric_limits<float>::min_exponent;
    const int expM = std::numeric_limits<float>::max_exponent;
    const float safmaxExp = std::min(1-expm,expM-1);
    const int   digits = std::numeric_limits<float>::digits;

    const float rhoMax = (safmaxExp-digits+1) / 2.0F - 1;
    const float rhoMin = -rhoMax;

    return exp2f( rhoMin + (rhoMax-rhoMin) * ((float)N) / RAND_MAX );
}
\end{lstlisting}
These two routines generate the polar coordinates of $f$ and $g$ as shown in the following piece of code
\begin{lstlisting}[language=c++]
// Random pairs (f,g)
{
    double theta = randToRadians( rand() );
    double phi   = randToRadians( rand() );
    float r1 = randToModulus( rand() );
    float r2 = randToModulus( rand() );
    f.real( r1 * ((float) cos(theta)) );
    f.imag( r1 * ((float) sin(theta)) );
    g.real( r2 * ((float) cos(theta+phi)) );
    g.imag( r2 * ((float) sin(theta+phi)) );
}
\end{lstlisting}
The angles and the log of the lengths are approximately uniform distributed. See \cref{fig:distributionInput}. Moreover, this choice of \texttt{rhoMin} and  \texttt{rhoMax} in \texttt{randToModulus} allows us to test only the unscaled part of each algorithm.
We choose 1 for the random generator seed for no particular reason.

\begin{figure}[!htbp]
\centering
\includegraphics[width=.5\linewidth]{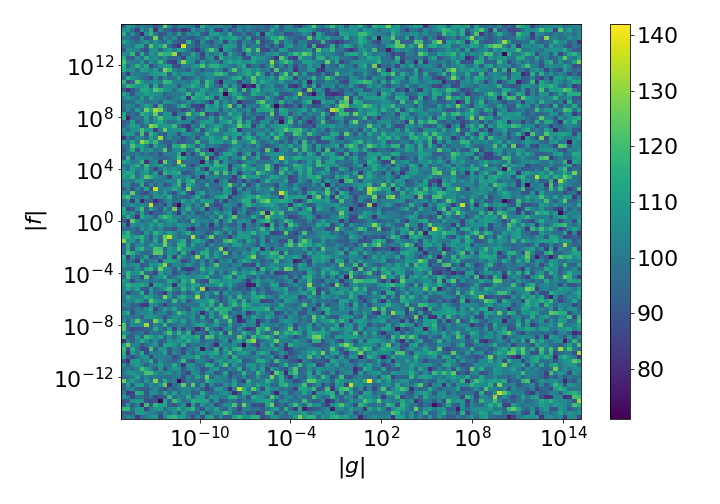}\\
\includegraphics[width=\linewidth]{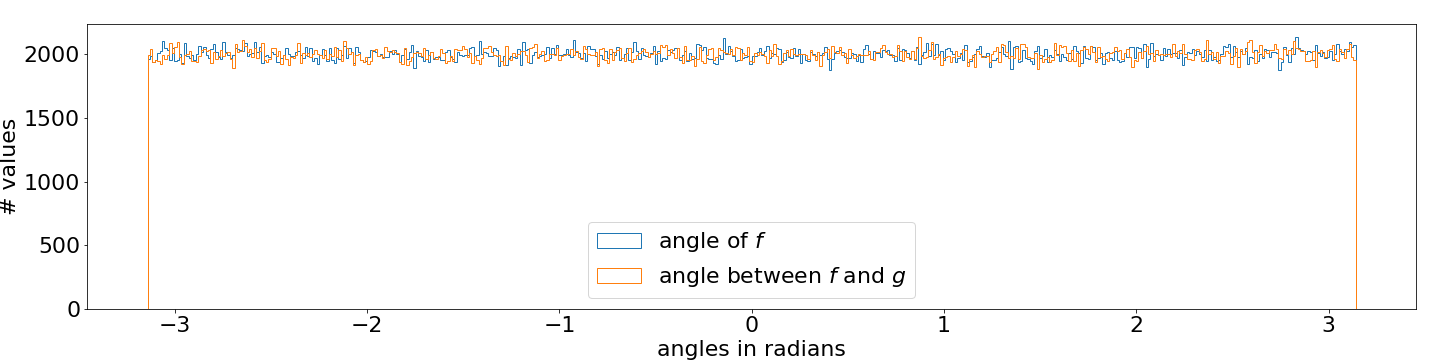}
\caption{Distribution of the lengths and angles of the input data.}
\label{fig:distributionInput}
\end{figure}

\subsection{Accuracy on a single rotation}
\label{sec:accuracyTests}

\Cref{fig:singValues} and \cref{tab:singValues} show the error in the singular values, and \cref{fig:backError} and \cref{tab:backError} show the relative backward errors $\|\float{Q}^H(\float{r},0)^T - (f,g)^T\|_2 / \|(f,g)\|_2$.
We run each code several ($10^6$) times with random input data.
As expected, applying the double precision algorithm and then casting the solution to single precision is at least as accurate as trying to compute the rotation in single precision.
As the theory predicts (see \cref{tab:comparisonWorstCaseMethods}), the new proposed algorithm is more accurate than the algorithms from LAPACK 3.9 and LAPACK 3.10.

\begin{figure}[!htbp]
\includegraphics[width=\linewidth]{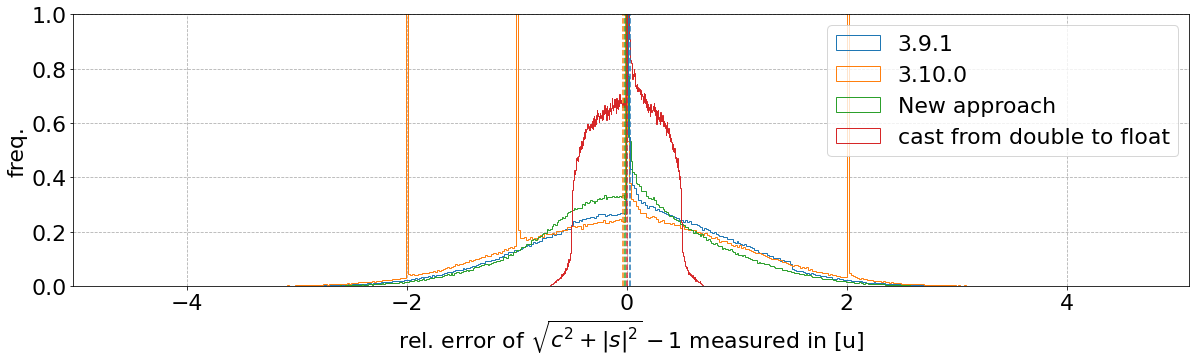}
\caption{Histogram of the error in the singular values, $Err := \sqrt{c^2+|s|^2} - 1$, for each algorithm measured in unit roundoff.}
\label{fig:singValues}
\end{figure}

\begin{table}[!htbp]
\caption{Error in the singular values $Err := \sqrt{c^2+|s|^2} - 1$ for each algorithm measured in unit roundoff $u$.}
\label{tab:singValues}
\centering
\begin{tabular}{r|c|c|c|c|c}
& avg(Err) & std(Err) & avg(|Err|) & std(|Err|) & max(|Err|)\\\hline
3.9 & 3.29e-02 & 7.08e-01 & 4.45e-01 & 5.52e-01 & 4.38e+00\\\hline
3.10 & -3.25e-02 & 9.47e-01 & 6.64e-01 & 6.76e-01 & 4.66e+00\\\hline
New & -1.14e-02 & 6.34e-01 & 3.91e-01 & 5.00e-01 & 3.94e+00\\\hline
Cast & 2.22e-03 & 2.23e-01 & 1.50e-01 & 1.65e-01 & 7.82e-01
\end{tabular}
\end{table}

\begin{figure}[!htbp]
\includegraphics[width=\linewidth]{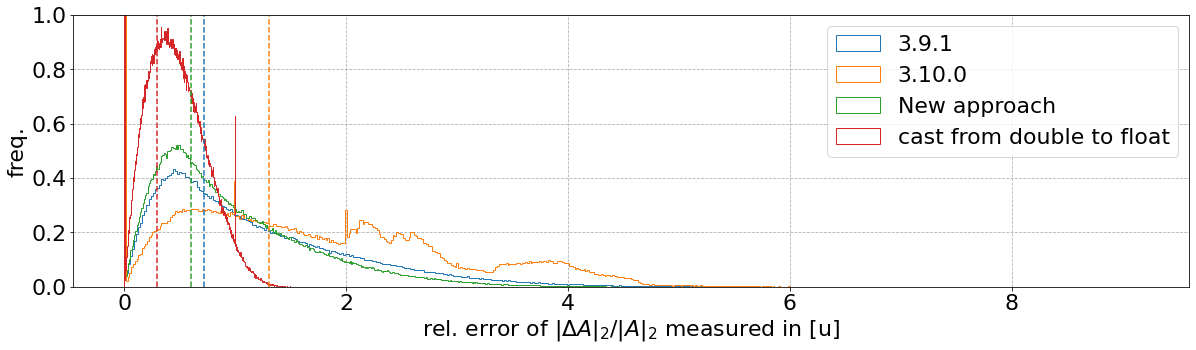}
\caption{Histogram of the relative backward errors $\|\float{Q}^H(\float{r},0)^T - (f,g)^T\|_2 / \|(f,g)\|_2$ for each algorithm measured in unit roundoff.}
\label{fig:backError}
\end{figure}

\begin{table}[!htbp]
\caption{Relative backward errors $\|\float{Q}^H(\float{r},0)^T - (f,g)^T\|_2 / \|(f,g)\|_2$ for each algorithm measured in unit roundoff $u$.}
\label{tab:backError}
\centering
\begin{tabular}{r|c|c|c|c|c}
& avg(Err) & std(Err) & avg(|Err|) & std(|Err|) & max(|Err|)\\\hline
3.9 & 7.18e-01 & 9.08e-01 & 7.18e-01 & 9.08e-01 & 9.15e+00 \\\hline
3.10 & 1.31e+00 & 1.27e+00 & 1.31e+00 & 1.27e+00 & 7.96e+00 \\\hline
New & 6.05e-01 & 7.35e-01 & 6.05e-01 & 7.35e-01 & 5.56e+00 \\\hline
Cast & 2.95e-01 & 3.09e-01 & 2.95e-01 & 3.09e-01 & 1.59e+00
\end{tabular}
\end{table}

In \cref{fig:funcSigma}, we analyze the graph of $(|f|,|g|) \to \sqrt{c^2+|s|^2}-1$.
The profile of errors in LAPACK 3.9, the proposed and the "cast from double to float" algorithms are similar in two aspects:
	(1) small errors ( $\le$ 10\% of $u$ ) when $|f| \gg |g|$;
	(2) several tiny regions with $\sigma > 1$ and $\sigma < 1$ that do not appear to follow any pattern.
When $|f| \gg |g|$, $c \approx 1$, $r \approx f$ and $s \approx \conj{g} / |f|$, and this case is approximated very accurately by the algorithm in LAPACK 3.9 and the proposed algorithm.
In LAPACK 3.10, however, if $|f|^2 \ll 1$ or $|f|^2 + |g|^2 \gg 1$, more accumulation error is introduced, and this includes the case where $|f| \gg |g|$. That is why we observe two regions of low accuracy on the region $|f| > |g|$.

\begin{figure}[!htbp]
\includegraphics[width=.49\linewidth]{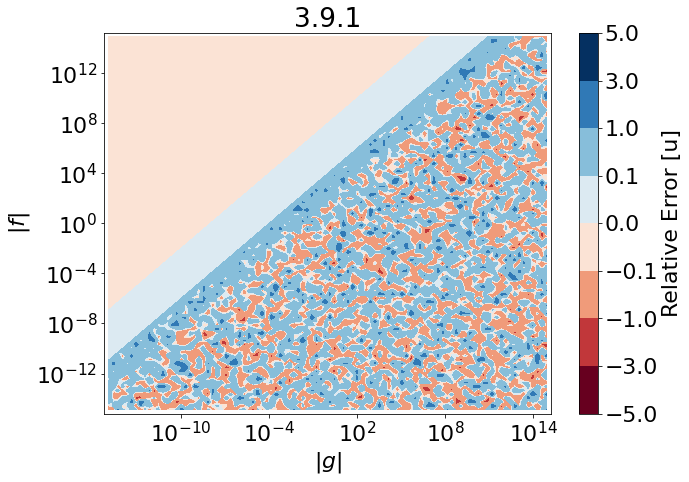}\hfill
\includegraphics[width=.49\linewidth]{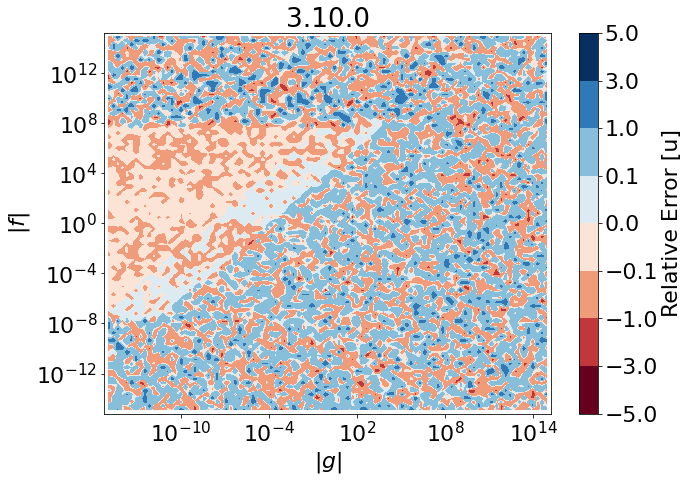}\\
\includegraphics[width=.49\linewidth]{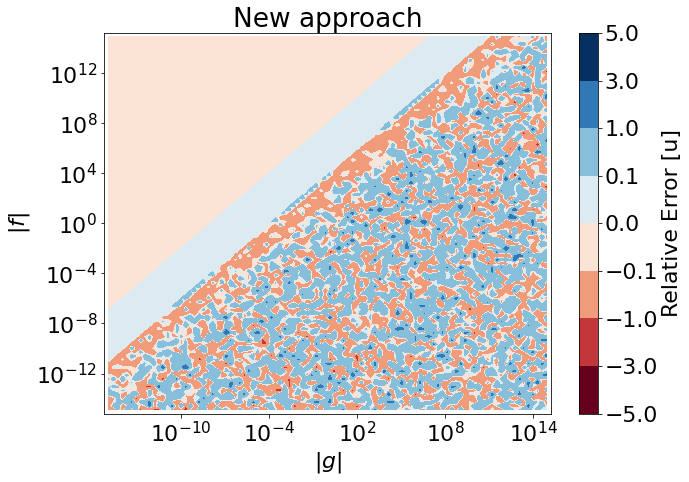}\hfill
\includegraphics[width=.49\linewidth]{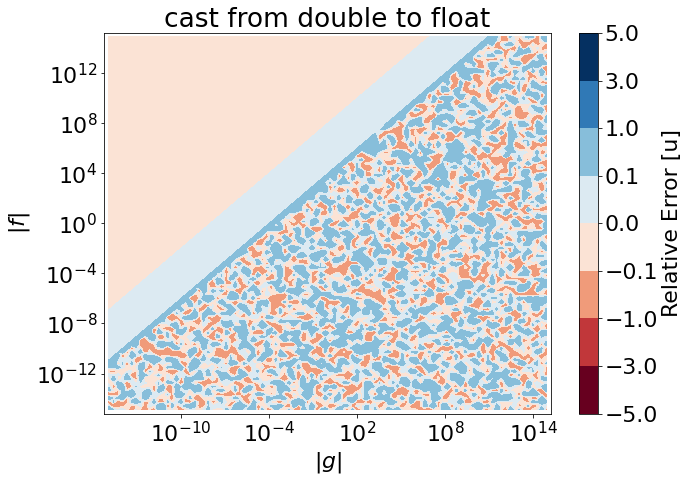}
\caption{Graph of $(|f|,|g|) \to \sqrt{c^2+|s|^2}-1$ for each \texttt{clartg} algorithm.}
\label{fig:funcSigma}
\end{figure}


\subsection{Accuracy of multiple rotations on 2-by-2 matrices}
\label{sec:accuracyMat2x2}

The loss of accuracy of a single applied rotation can be harmless to the overall numerical computation. However, applying multiple rotations to a matrix may deteriorate the expected final result. In this section, we apply several rotations to an initial unitary matrix and (1) predict the norm of the final matrix, and (2) show the loss of orthogonality.
We use double precision to compute the matrix-matrix multiplications and for measuring the errors.

Let $X$ be a random variable associated with the distribution of the singular values of a \texttt{clartg} algorithm, and define $Y := X^M$ for some big number $M$.
We can estimate $\mu_Y := E[Y]$ and $\sigma_Y := \sqrt{Var[Y]}$ from $\mu_X := E[X]$ and $\sigma_X := \sqrt{Var[X]}$. In \cref{sec:statisticsProdNumbers}, we show that $Y$ can be approximated by a log-normal distribution with
\begin{align}\label{eq:estimateY}
\mu_Y 
\approx \mu_X^M\,,\qquad
\sigma_Y
\approx \mu_X^M \sqrt{ e^{M\left(\frac{\sigma_X}{\mu_X}\right)^2}-1 }\,.
\end{align}

\Cref{fig:estimateProdSingVal,tab:estimateProdSingVal} compare the estimates above using data from \cref{tab:singValues} with experimental measurements. We choose $M = 10^5$. To generate the experimental data, we multiply the singular values of $M$ rotation matrix and repeat this procedure $N = 10^3$ times. We generate the $M N = 10^8$ input pairs $(f,g)$ using the procedure described in \cref{sec:numericalSetup}.
The curves predicted are very accurate, and \cref{tab:estimateProdSingVal} shows that the prediction error is less than 7\% for both average and standard deviation values.
These curves help predict the norm of a given matrix in $\Complex^{2\times 2}$ after $M$ rotations as explained in \cref{sec:accuracyComplex}.
So, $\left\| \prod_{i=1}^{m}\float{Q_i} \right\|_F = \left(\prod_{i=1}^{m}\sigma_i\right) \|\tilde{Q}\|_F = \sqrt{2} \left(\prod_{i=1}^{m}\sigma_i\right)$, where $\tilde{Q} \in \Complex^{2\times 2}$ is unitary.
This is exactly what we observe in practice.
Finally, observe that the new approach is better than both LAPACK 3.9 and LAPACK 3.10 algorithms, which is a natural extension from what was observed in \cref{sec:accuracyTests}.

\begin{figure}[!htbp]
\includegraphics[width=\linewidth]{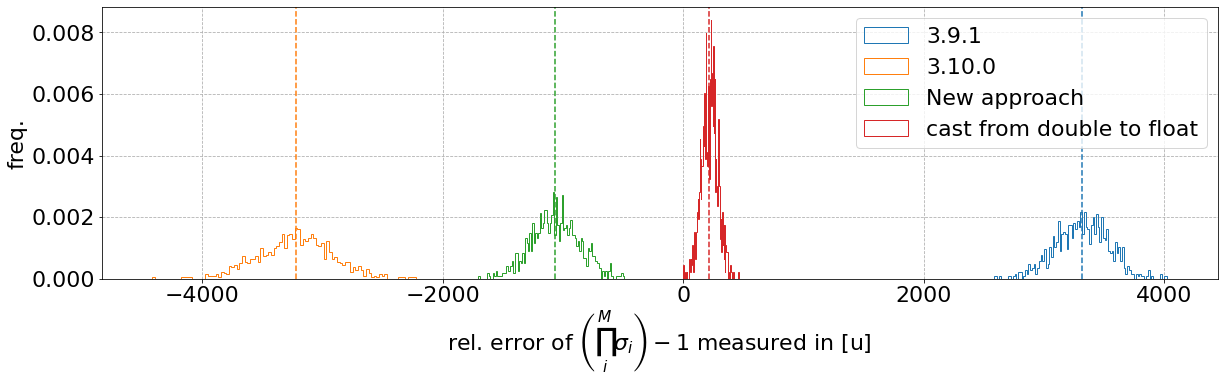}\\
\includegraphics[width=\linewidth]{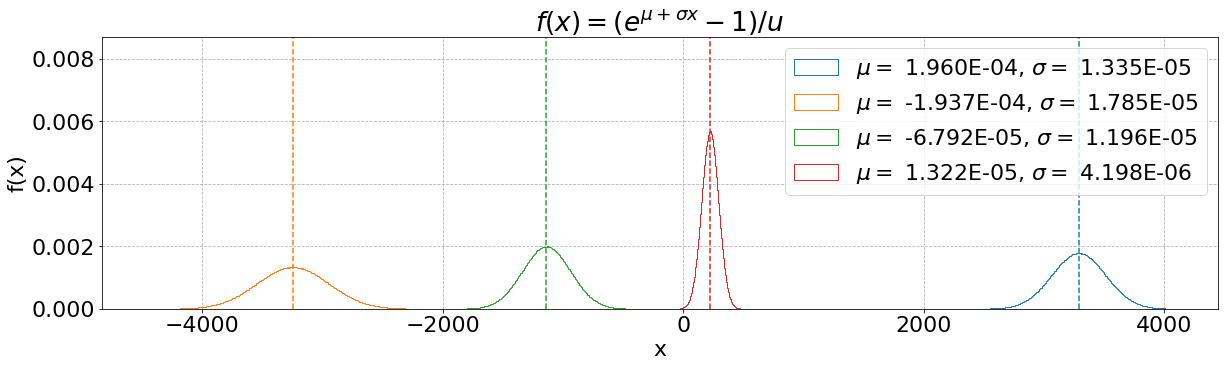}
\caption{Top: Histogram of $\left(\prod_{i=1}^M \sigma_i\right) - 1$, $M = 10^5$, for each algorithm measured in unit roundoff. Bottom: Estimates using \cref{eq:estimateY} and data from \cref{tab:singValues}.}
\label{fig:estimateProdSingVal}
\end{figure}

\begin{table}[!htbp]
\caption{Error in the singular values $\left(\prod_{i=1}^M \sigma_i\right) - 1$, $M = 10^5$, for each algorithm measured in unit roundoff $u$. \Cref{eq:estimateY} and data from \cref{tab:singValues} are used to compute $\mu_Y$ and $\sigma_Y$.}
\label{tab:estimateProdSingVal}
\centering
\begin{tabular}{r|c|c|c|c}
& avg(Err) & $(\mu_Y - 1)/u$ & std(Err) & $(\sigma_Y - 1)/u$\\\hline
3.9 & 3.31e+03 & 3.29e+03 & 2.18e+02 & 2.24e+02 \\\hline
3.10 & -3.22e+03 & -3.25e+03 & 3.06e+02 & 2.99e+02 \\\hline
New & -1.07e+03 & -1.14e+03 & 2.00e+02 & 2.01e+02 \\\hline
Cast & 2.17e+02 & 2.22e+02 & 6.83e+01 & 7.04e+01
\end{tabular}
\end{table}

\subsection{Accuracy of multiple rotations on 3-by-3 matrices}

Now, we want to measure the accumulation error when rotating $M$ times a unitary matrix  $V_0 \in \Complex^{3 \times 3}$. For each rotation $\float{Q}_i \in \Complex^{2 \times 2}$, we define $\float{Q}_{i,IJ} \in \Complex^{3 \times 3}$ as the rotation $\float{Q}_i$ in the coordinate directions $(I,J)$. Then, let 
\begin{align*}
	V_M := Q_{M,I_MJ_M} (Q_{M-1,I_{M-1}J_{M-1}} \cdots (Q_{2,I_2J_2}(Q_{1,I_1J_1} V_0))
\end{align*}
where
\begin{align*}
	(I_k,J_k) = \begin{cases}
		(1,2) & \text{if } (k-1) \mod 3 = 0,\\
		(2,3) & \text{if } (k-2) \mod 3 = 0,\\
		(1,3) & \text{if } k \mod 3 = 0.
	\end{cases}
\end{align*}
We use the input data from \cref{sec:accuracyMat2x2}.

Since the rotation is applied on different rows at each time, the columns of the final matrix $V_M$ are not orthogonal, which differs from the $\Complex^{2 \times 2}$ case.
The orthogonality of the columns of $V_M$ is measured as follows:
(1) compute $S := V_M^H V_M$;
(2) compute the average of the absolute values of the off-diagonal elements of $S$.
We are still able to estimate the norm of $V_M$ using $\prod_{i=1}^M \sigma_i$.
Observe that each row of $V_0$ is rotated $(2/3) M$ times, so the norm of each row of $V_M$ is roughly equal to $\left(\prod_{i=1}^M \sigma_i\right)^{\frac23}$.
Therefore, $\| V_M \|_F \approx \sqrt{3} \left(\prod_{i=1}^M \sigma_i\right)^{\frac23}$.
Since $\prod_{i=1}^M \sigma_i$ is close to 1, we can avoid the nonlinearity by using the Taylor series centered at one:
\begin{align}\label{fig:estimateNormMat3x3}
	\frac{\left\| V_M \right\|_F}{\sqrt{3}} \approx 1 + \frac23 \left(\prod_{i=1}^{M}\sigma_i -1 \right)
	\quad \Leftrightarrow \quad
	\frac32\left( \frac{\left\| V_M \right\|_F}{\sqrt{3}} - 1 \right) \approx \prod_{i=1}^M\sigma_i - 1.
\end{align}

\begin{figure}[!htbp]
\includegraphics[width=\linewidth]{prodSingValue}\\
\includegraphics[width=\linewidth]{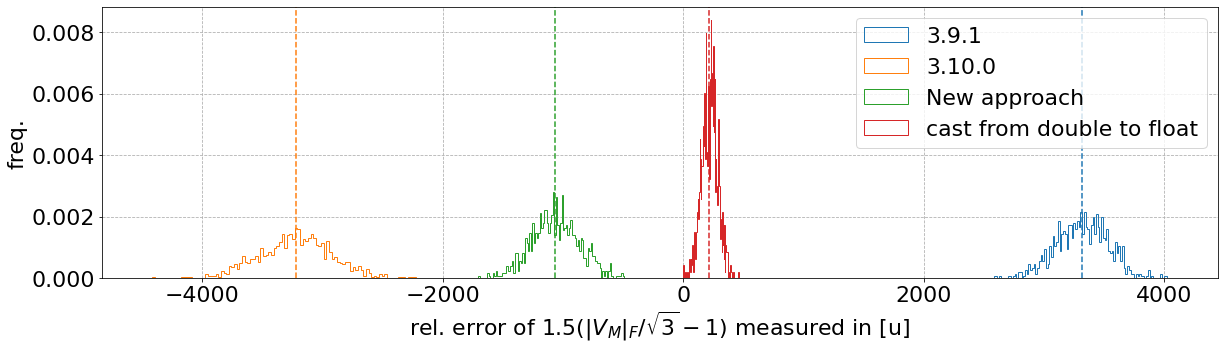}\\
\includegraphics[width=\linewidth]{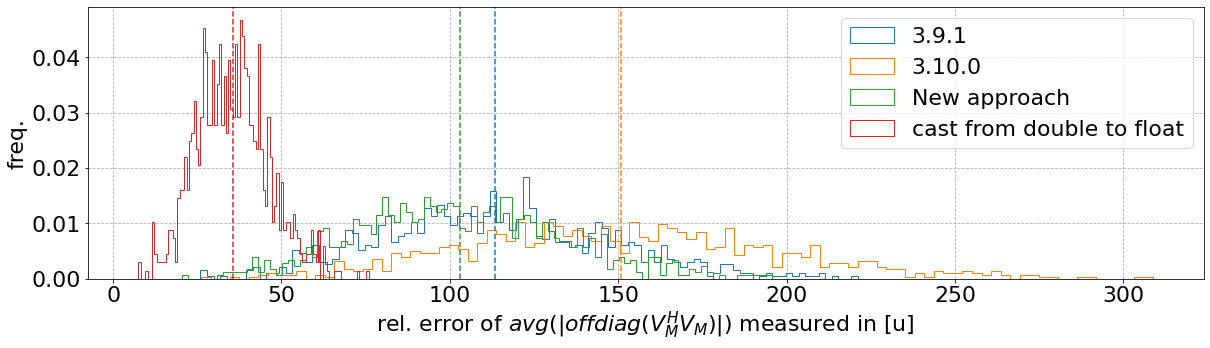}
\caption{Histograms of $\left(\prod_{i=1}^M \sigma_i\right) - 1$, $1.5(\|V_M\|_F/\sqrt{3}-1)$, and $\text{avg}(|\text{offdiag}(V_M^H V_M)|)$, $M = 10^5$, for each algorithm measured in unit roundoff.}
\label{fig:estimateProdMat3x3}
\end{figure}

\Cref{fig:estimateProdMat3x3} shows the experimental results. Notice that \cref{fig:estimateNormMat3x3} is indeed a good approximation for this dataset.
Observe that the new approach is better than both LAPACK 3.9 and LAPACK 3.10 algorithms also when it comes to preserving the orthogonality of the columns or rows after $M$ rotations.

The analysis for the rotation of $3\times 3$ matrices naturally extends to general $n\times n$ matrices, $n > 3$.

\subsection{Performance tests}

The last set of tests measures the time of the different algorithms for computing  Givens rotations. For that, we use Fortran implementations in the LAPACK library.
We compile LAPACK using the release flags \texttt{-O2 -DNDEBUG}, and the following commits:
\begin{itemize}
\item LAPACK 3.9: \href{https://github.com/Reference-LAPACK/lapack/tree/lapack-3.9}{github.com/Reference-LAPACK/lapack/tree/lapack-3.9}
\item LAPACK 3.10: \href{https://github.com/Reference-LAPACK/lapack/tree/lapack-3.10}{github.com/Reference-LAPACK/lapack/tree/lapack-3.10}
\item New: \href{https://github.com/Reference-LAPACK/lapack/pull/631/commits/c362fff1eee80fdd88b6fd60be3fb9d045cb08bb}{github.com/Reference-LAPACK/lapack/pull/631/commits/c362fff}
\end{itemize}

We generate $(f,g)$ as explained in \cref{sec:numericalSetup}. To better cover the different input configurations, we test several scenarios of \texttt{(rhoMin,rhoMax)} as shown in \cref{tab:timingRange}.
We run the same code with \num{1e7} different pairs $(f,g)$ to obtain each average value.
Moreover, we run 3 times each test and take the lower execution time to reduce the interference of other processes running in the machine.

\begin{table}[!htbp]
\caption{Average times in nanoseconds using multiple values $(\rho_{min},\rho_{max})$ to generate $(f,g)$.}
\label{tab:timingRange}
\centering
\begin{tabular}{lccrrrrrr}
\toprule
& $f$ & $g$ & \multicolumn{3}{c}{\texttt{clartg}} & \multicolumn{3}{c}{\texttt{zlartg} + cast} \\
& $(\rho_{min},\rho_{max})$ & $(\rho_{min},\rho_{max})$ &  3.9 &  3.10 &  New &  3.9 &  3.10 &  New \\
\midrule
1 &               (-50.5, 50.5) &               (-50.5, 50.5) &   61.0 &    38.7 & 43.7 &   89.7 &    61.7 & 70.1 \\
2 &                   (-63, 62) &                   (-63, 62) &   68.8 &    45.3 & 45.3 &   89.3 &    61.7 & 70.1 \\
3 &                (-63, -50.5) &                  (50.5, 62) &  120.5 &    53.4 & 38.9 &   90.8 &    61.7 & 81.8 \\
4 &                  (50.5, 62) &                (-63, -50.5) &   78.3 &    54.2 & 40.4 &   90.4 &    61.7 & 67.7 \\
5 &                 (-125, 127) &                 (-125, 127) &   93.6 &    57.5 & 64.0 &   89.4 &    61.6 & 69.9 \\
6 &                 (-125, -63) &                   (62, 127) &  113.1 &    57.7 & 65.8 &   90.0 &    61.6 & 81.2 \\
7 &                   (62, 127) &                 (-125, -63) &   80.4 &    52.5 & 63.9 &   89.8 &    61.8 & 66.7 \\
\bottomrule
\end{tabular}
\end{table}

In the first scenario of \cref{tab:timingRange}, only the unscaled part of each \texttt{clartg} algorithm is used, and that is why all versions of \texttt{clartg} spend less time to finish.
The second, third and fourth scenarios stress the scaled part of the \texttt{clartg} algorithms from LAPACK 3.9 and LAPACK 3.10, but still use only the unscaled part of the new approach.
The remaining scenarios stress the scaled part of each \texttt{clartg} algorithm.
All these scenarios stress only the unscaled part of the double precision algorithms, \texttt{zlartg}.

We observed lower execution times for the algorithms from LAPACK 3.10 on cases 1 and 5 -- 7. The algorithms from LAPACK 3.9 have the highest execution times in all scenarios. Mind that the Givens rotations in 3.10 use fewer divisions than the other algorithms and that the former was designed to be computationally efficient.
In cases 2 -- 4, the new \texttt{clartg} algorithm uses only its unscaled part, so it is supposed to be faster than the algorithm from LAPACK 3.10.
When there is a scaling in \texttt{clartg}, we observe that single and double precision algorithms have close execution times, and sometimes the double-precision algorithm is faster. We shall highlight that the double-precision algorithm is always at least as precise as the single-precision one.

\section{Conclusions}
\label{sec:concl}

In this document, we analyzed different algorithms for generating Givens rotations and compared them via both theoretical worst-case scenarios and numerical experiments.

We briefly discussed the differences between the real-valued algorithms in LAPACK 3.9 and LAPACK 3.10 and concluded that they approximate the same quantities with different choices of signs. The choice of signs in the algorithm from LAPACK 3.10 is more adequate since it matches real- and complex-arithmetic outputs.

We analyzed the \texttt{clartg} algorithms in LAPACK 3.9 and LAPACK 3.10 and, after that, proposed a new and more accurate algorithm. We provided several numerical experiments that validate the theory. We also showed that the bias in the error of the rotation singular values is less biased, i.e., closer to zero. We couldn't, however, arrive at the bias of the ``cast from double to float'' algorithm. So, we believe there are still opportunities to improve it. The lower the bias, the more accurate the application of multiple rotations is.

The proposed \texttt{clartg} algorithm is slower than the algorithm in LAPACK 3.10 in most of the cases, which is expected due to the additional floating-point divisions and, possibly, additional square roots. We believe that the best algorithm for generating rotation matrices should be the most accurate, especially when the same rotation is applied several times. It is worth mentioning that we verified that the most accurate strategy to generate Givens rotations is to use an algorithm in high precision and then cast the output to the desired lower precision.

Given a \texttt{clartg} algorithm, we can find the expected value and variance in the singular values of its output rotation matrices. Then, we use those two values to estimate the distribution of the product of $M$ singular values, when $M$ is big.
In this work, we successfully use this estimated distribution to predict the norm of a matrix rotated $M$ times. One may use this information, for example, to rescale without having to compute the norm of the final matrix.

The discussion on how to bound the square root of rounding errors is a byproduct of the numerical analysis presented in this document. Those results are particularly interesting when there are a few errors accumulated, which is the case in the analysis of a single Givens rotation. We verify that $\sqrt{1+\theta_n}$ is bounded by $\gamma_{\lfloor \frac{n}2 \rfloor + 1}$. In practice, one may expect $1+\gamma_{\frac{n}2}$ for small $n$.

\appendix

\section{Expectation and Variance of products}
\label{sec:statisticsProdNumbers}

Let $m$ be a big number.
Suppose $\{X_i\}_{i=1}^m$ is a set of random variables from a distribution with expectation $\mu_X$ and variance $\sigma_X^2$, and define $Y := \prod_{i=1}^m X_i$. It is possible to estimate $E[Y]$ and $Var[Y]$ from $\mu_X$ and $\sigma_X$ using the distributions $Z_i := \log(X_i)$ and $W := \log(Y)$.
Using the Taylor series, we obtain
\begin{align*}
	E[Z_i] = E[\log(X_i)] &= \sum_{n=0}^{\infty} \frac{\log^{(n)}(\mu_X)}{n!}  E[(X_i-\mu_X)^n]\\
	&= \log(\mu_X) + \sum_{n=1}^{\infty} \frac{(-1)^{n-1}}{n} \frac{E[(X_i-\mu_X)^n]}{\mu_X^n}.
\end{align*}
Since $E[(X_i-\mu_X)] = 0$ and, by definition, $E[(X_i-\mu_X)^2] = \sigma_X^2$, we may write
\begin{align*}
	E[Z_i] = \log(\mu_X) - \frac12\left(\frac{\sigma_X}{\mu_X}\right)^2 + \sum_{n=3}^{\infty} \frac{(-1)^{n-1}}{n} \frac{E[(X_i-\mu_X)^n]}{\mu_X^n}.
\end{align*}
For the variance, using $\log^{(n)}(\mu_X) = (-1)^{n-1} (n-1)! / \mu_X^n$, we obtain
\begin{align*}
	Var[Z_i] &= Var\left[\log'(\mu_X) (X_i-\mu_X)\right] + Var\left[\sum_{n=2}^{\infty}\frac{\log^{(n)}(\mu_X)}{n!} (X_i-\mu_X)^n\right]\\
	&\quad + Cov\left[\log'(\mu_X) (X_i-\mu_X),\sum_{n=2}^{\infty}\frac{\log^{(n)}(\mu_X)}{n!} (X_i-\mu_X)^n\right]
\end{align*}
then, using $\log^{(n)}(\mu_X) = (-1)^{n-1} (n-1)! / \mu_X^n$ and $Cov[A,B] := E[AB] - E[A]E[B]$, we obtain
\begin{align*}
	Var[Z_i] &= \left(\frac{\sigma_X}{\mu_X}\right)^2 + Var\left[\sum_{n=2}^{\infty}\frac{(-1)^{n-1}}{n \mu_X^n} (X_i-\mu_X)^n\right]\\
	&\quad + E\left[\frac{(X_i-\mu_X)}{\mu_X}\sum_{n=2}^{\infty}\frac{(-1)^{n-1}}{n \mu_X^n} (X_i-\mu_X)^n\right]\\
	&= \left(\frac{\sigma_X}{\mu_X}\right)^2 + Var\left[\sum_{n=2}^{\infty}\frac{(-1)^{n-1}}{n \mu_X^n} (X_i-\mu_X)^n\right]\\
	&\quad + \sum_{n=3}^{\infty}\frac{(-1)^n}{n-1} \frac{E\left[(X_i-\mu_X)^n\right]}{\mu_X^n}
\end{align*}
We may truncate the two series to obtain:
$$\mu_Z := E[Z_i] \approx \log(\mu_X) -\frac{1}{2}\left(\frac{\sigma_X}{\mu_X}\right)^2,\quad \sigma_Z := \sqrt{Var[Z_i]} \approx \frac{\sigma_X}{\mu_X}.$$

Now, notice that $\;W = \log(Y) \;\Leftrightarrow\; W = \sum_{i=1}^m Z_i$.
Since $m$ is big, we may apply the 
Central Limit Theorem to conclude that $W$ is approximately a normal distribution with average $m\mu_Z$ and standard deviation $\sqrt{m}\sigma_Z$.
Finally, $Y = \exp(W)$ is a log-normal distribution with
\begin{align*}
\text{E}\left[Y\right] 
&= \exp\left(\mu_W + \frac{\sigma_W^2}{2}\right)\\
&\approx \exp\left(m\mu_Z + m\frac{\sigma_Z^2}{2}\right)\\
&\approx \exp\left(m\log(\mu_X) -\frac{m}{2}\left(\frac{\sigma_X}{\mu_X}\right)^2 + \frac{m}2 \left(\frac{\sigma_X}{\mu_X}\right)^2 \right)\\
&= \mu_X^m
\end{align*}
and
\begin{align*}
\text{Var}\left[Y\right]
&= (\exp(\sigma_W^2)-1) \exp(2\mu_W+\sigma_W^2)\\
&\approx (\exp(m\sigma_Z^2)-1) \exp(2m\mu_Z+m\sigma_Z^2)\\
&\approx \left(\exp\left(m\left(\frac{\sigma_X}{\mu_X}\right)^2\right)-1\right) \exp\left(2m\log(\mu_X)\right)\\
&= (\mu_X^m)^2\left(e^{m\left(\frac{\sigma_X}{\mu_X}\right)^2}-1\right).
\end{align*}

\bibliographystyle{plain}
\bibliography{references}

\end{document}